\documentclass[letterpaper, 10pt, onecolumn]{cssconf}
\IEEEoverridecommandlockouts
\usepackage{cite}
\usepackage{amsmath, amssymb, amsfonts, mathtools}
\usepackage{algorithm}
\usepackage{algorithmic}
\usepackage{float}
\usepackage{graphicx}
\usepackage{textcomp}
\usepackage{xcolor}
\usepackage{pgfplots}
\usepgfplotslibrary{groupplots}
\usetikzlibrary{calc}
\pgfplotsset{compat=1.18}
\makeatletter
\let\NAT@parse\undefined
\makeatother
\usepackage[
    colorlinks=false,
]{hyperref}

\newtheorem{theorem}{Theorem}

\newtheorem{definition}{Definition}

\newtheorem{proposition}{Proposition}
\newtheorem{assumption}{Assumption}

\newtheorem{remark}{Remark}

\usepackage{mathtools}

\newcommand{\R}{\mathbb{R}}

\newcommand{\N}{\mathbb{N}}

\newcommand{\vct}[1]{\pmb{#1}}
\newcommand{\mat}[1]{\mathbf{#1}}

\newcommand{\T}{^\top}

\newcommand{\psd}{\succeq0}

\newcommand{\brackets}[1]{\left[ #1 \right]}
\newcommand{\paren}[1]{\left(#1 \right)}

\newcommand{\vecc}[1]{\begin{bmatrix} #1 \end{bmatrix}}

\DeclarePairedDelimiter{\abs}{\lvert}{\rvert}

\DeclareMathOperator{\diag}{diag}
\DeclareMathOperator{\trace}{tr}

\DeclareMathOperator{\E}{\mathbb{E}}

\begin{document}

\title{Beyond Ellipsoids: Semi-Algebraic Tightening for Chance Constraints Under Actuator Saturation}

\author{
    Carlo Karam,, Mirko Fiacchini, and Matteo Tacchi-Bénard
\thanks{
    The authors are with Univ. Grenoble Alpes, Grenoble INP, CNRS, GIPSA-lab 
    (e-mail: \{carlo.karam, mirko.facchini, matteo.tacchi\}@gipsa-lab.fr).
}
\thanks{Matteo Tacchi-Bénard's work is supported by the French National Research Agency (ANR) under the PIVOINE project grant
ANR-25-CE48\-1599\-01.}
}

\maketitle

\begin{abstract}
    Motivated by stochastic model predictive control applications, we present a semi-algebraic approach to constraint tightening for chance-constrained systems with unbounded additive disturbances and saturated inputs. The saturated error dynamics are handled via their exact piecewise-affine structure, which naturally accommodates asymmetric saturation bounds. A polynomial Lyapunov function satisfying a drift condition is then designed using sum-of-squares optimization, yielding finite-time probabilistic reachable sets and a probabilistic ultimate bound. The set geometry is explicitly optimized for constraint tightening, further reducing conservatism. A numerical example demonstrates the effectiveness of the design.
\end{abstract}


\section{Introduction}\label{sec:introduction}
Reachability analysis provides a formal framework for characterizing the propagation of dynamical systems trajectories given a set of initial conditions, admissible inputs, and disturbances. This is crucial in safety-critical control, where post-hoc validation probes only finitely many realizations and therefore cannot exclude rare but possible constraint violations. Specifically, in predictive control, reachability analysis becomes a synthesis tool: reachable error sets are often computed offline to tighten state constraints before solving the online optimization, ensuring robust performance under disturbances.

For dynamical systems with bounded uncertainty, established reachability analysis methods range from ellipsoidal and zonotopic approximations to polytopic and support function techniques~\cite{kurzhanskiy2007ellipsoidal, girard2005reachability, leguernic2009reachability, blanchini2015set, althoff2021set}. More recently, sum-of-squares (SOS) programming has enabled rigorous semi-algebraic approximations of forward reachable sets for polynomial systems~\cite{lasserre2001global, magron2019discrete,korda2021spatio}. These robust methods provide worst-case guarantees but become overly conservative, or yield unbounded reachable sets, when disturbances are stochastic with unbounded support, leading to potentially empty tightened sets.

Stochastic reachability addresses this limitation by replacing absolute containment with probabilistic bounds at a prescribed violation level. Dynamic programming and abstraction methods characterize reachability probabilities in stochastic reach-avoid formulations~\cite{abate2008probabilistic}. Functional approaches instead search for stochastic barrier or Lyapunov functions, often enforced through SOS relaxations~\cite{prajna2007framework}. Related certificate-based methods relax one-step supermartingale requirements through $k$-inductive stochastic barriers~\cite{anand2022kinductive}. More notably, SOS certificates have recently been extended to guarantee almost-sure reachability for stochastic polynomial systems~\cite{kordabad2025almost}.

In Stochastic Model Predictive Control (SMPC), probabilistic reachability enters through stochastic tube constructions, which decompose the state into a nominal component  (optimized via nominal MPC) and a stochastic error component, which is stabilized and bounded offline by a probabilistic reachable set (PRS)~\cite{mesbah2016stochastic, cannon2011stochastic, fiacchini2021probabilistic, hewing2018stochastic,fiacchini2026measured}. State constraint-tightening by this PRS ensures that the original state chance constraints are satisfied. However, most tube-based or constraint-tightening SMPC constructions rely on an ideal linear feedback law to stabilize the error, which can become arbitrarily large under unbounded disturbances, surpassing actuator saturation limits. The resulting nonlinearity in the error dynamics can invalidate the derived PRS. Existing stochastic tube-based saturated SMPC approaches recover guarantees through convex embeddings~\cite{KaramPRS2025, karam2026SMPC}, but the resulting ellipsoidal bounds remain conservative, in large part due to their inherent symmetry.

The proposed framework moves beyond these ellipsoidal sets by exploiting the exact piecewise-affine (PWA) structure of the saturated error dynamics as in~\cite{tarbouriechStability2011,daSilva1999}, and imposing a common drift condition over all saturation regions. The probabilistic reachability problem is then cast as a search for SOS-based Foster-Lyapunov functions satisfying this drift criterion, whose sublevel sets serve as guaranteed over-approximations of the true transient PRS and steady-state probabilistic ultimate bound (PUB) of the stochastic error process. The resulting set geometry is optimized for constraint tightening.

\subsection{Notation}
Denote with $\R$ and $\N$, the reals and naturals, respectively. Let $\vct{1}_n \in \mathbb{R}^n$ denote the vector of ones and $\mat{I}_n$ the identity matrix of dimension $n$. For $x \in \R^n$, $\|x\|_\infty$ is the infinity norm, $x_i$ denotes the $i$-th entry, and the absolute value $|x|$ holds componentwise. The operators $\diag(\cdot)$ and $\trace(\cdot)$ denote the diagonal matrix constructor and the trace operator. Probability and expectation are denoted by $\Pr\{\cdot\}$ and $\mathbb{E}[\cdot]$, respectively. $\mathbb{R}[y]$ is the polynomial ring in $y$, and $\Sigma[y] \subset \mathbb{R}[y]$ is the sum-of-squares (SOS) cone. Vectors of monomials in $y$ up to degree $r$ are denoted $\Phi_{0:r}(y)$, while $\Phi_{1:r}(y)$ strictly excludes the constant (degree $0$) term. For sets $\mathcal{A}, \mathcal{B} \subseteq \mathbb{R}^n$, the Pontryagin difference is $\mathcal{A} \ominus \mathcal{B} = \left\{a \in \R^n \mid a + b \in \mathcal{A},\ \forall b\in\mathcal{B} \right\}$, and the support function in direction $q$ is $\delta_{\mathcal{A}}(q) = \sup_{a\in\mathcal{A}} q^\top a$.

\section{Preliminaries and Problem Statement}\label{sec:prelims}
This section formally introduces the saturated stochastic system under consideration, defines the probabilistic reachability concepts required for constraint tightening, and details the exact piecewise-affine structure of the saturated error dynamics.
\subsection{Problem Statement}\label{subsec:prob-statement}
Consider the uncertain discrete-time system 
\begin{equation}
    \label{eq:base-lti}
    x_{k+1} = Ax_k + B u_k + w_k
\end{equation}
where $x_k, w_k \in \R^n$, $u_k \in \R^m$, and the matrices $A \in \R^{n \times n}, B \in \R^{n \times m}$ are known, with $A$ Schur-stable. 
\begin{assumption}
The process $w_k \sim \mathbb{P}_w$ is i.i.d.\ and satisfies $\E[w_k] = 0$. Moreover, the probability measure $\mathbb{P}_w$  admits finite moments up to the required degree.
\end{assumption}
The admissible state and input sets are
\begin{equation*}
\mathcal{X} = \{x \in \R^n \mid H x \leq h \}, \quad \mathcal{U} = \{ u \in \R^m \mid \|u\|_\infty \leq 1 \},
\end{equation*}
where $H \in \mathbb{R}^{n_c \times n}$ and $h \in \mathbb{R}^{n_c}$, with $n_c$ denoting the number of facets. Within stochastic model predictive control (SMPC), the goal is to enforce state chance constraints
\begin{equation}
    \label{eq:state-chance-constraints}
    \Pr \{ x_k \in \mathcal{X} \mid x_0 \} \geq 1 - \varepsilon, \quad \forall k \in \N,    
\end{equation}
for a prescribed violation probability $\varepsilon \in (0, 1)$, together with the hard input constraint $u_k \in \mathcal{U}$, for all $k \in \N$. 

Direct treatment of the chance constraints is generally computationally prohibitive. A standard approach therefore relies on constraint tightening based on the decomposition
\begin{equation}
x_k = z_k + e_k, \quad u_k = v_k + K e_k,    
\end{equation}
where $z_k$ denotes the nominal state, $e_k$ is the disturbance-induced deviation, $K$ is a fixed linear feedback gain, and $v_k$ the nominal control input produced by a nominal MPC.\@ Because the disturbance may be unbounded, the linear feedback term $K e_k$ is not guaranteed to remain in $\mathcal{U}$. To guarantee strict feasibility of the applied input, the control signal $u_k$ is saturated, yielding
\begin{equation}
    x_{k+1} = Ax_k + B\varphi(v_k + Ke_k) + w_k,
\end{equation}
where the saturation function $\varphi(\cdot)$ acts component-wise, with bounds normalized to $\pm 1$. The resulting nominal and error dynamics are
\begin{align}
    z_{k+1} &= A z_k + B v_k, \label{eq:nominal-dynamics}\\
    e_{k+1} &= f(e_k, v_k) + w_k, \label{eq:sat-error-dynamics}
\end{align}
where
\begin{equation}
    f(e,v)=Ae+B\paren{\varphi(Ke+v)-v}.
    \label{eq:error-map}
\end{equation}
The nominal input satisfies $v_k \in \mathcal{U}$, and the prediction error is initialized at $e_0 = 0$, the initial state being known.

\begin{remark}\label{rem:sat-bounds}
    While $\mathcal{U}$ is defined as the unit hypercube to simplify the presentation, this setting extends naturally to any arbitrary rectangular control set. Symmetric bounds can be handled via a simple rescaling of $B$ and $K$, and asymmetric saturation bounds can be accommodated by mapping them to this normalized domain via an affine input transformation.
\end{remark}

Chance constraints are enforced indirectly by bounding, with high probability, the error within probabilistic sets. This concept is introduced herein.
\begin{definition}[Probabilistic Reachable Set]
    The process $e_k$ with $e_0 = 0$ admits the set $\mathcal{R}_k^\varepsilon$ as a $k$-step probabilistic reachable set (PRS) with violation level
    $\varepsilon \in (0,1)$ if
    \begin{equation*}
        \Pr\{e_k\in\mathcal{R}_k^\varepsilon\}\geq1-\varepsilon .
    \end{equation*}
\end{definition}
\vspace*{0.2cm}
\begin{definition}[Probabilistic Ultimate Bound]
    The stochastic process $e_k$ admits the set $\mathcal{R}^\varepsilon$ as a probabilistic ultimate bound (PUB) with violation level
    $\varepsilon \in (0,1)$ if, for every initial condition $e_0\in\R^n$, there exists a finite time $t(e_0)$ such that
    \begin{equation*}
        \Pr\{e_k\in\mathcal{R}^\varepsilon\}\geq1-\varepsilon,
        \qquad \forall k\geq t(e_0).
    \end{equation*}
\end{definition}
\vspace*{0.2cm}
A PRS thus provides finite-time probabilistic containment, whereas a PUB characterizes the steady-state error set. Accordingly, at each time instant $k$, the nominal MPC enforces $z_k \in \mathcal{Z}_k = \mathcal{X}\ominus\mathcal{R}_k^\varepsilon$, and an analogous terminal tightening is obtained from the PUB.\@ If the error lies within the PRS or PUB at the desired violation level $\varepsilon$, then the constraint $z_k \in \mathcal{Z}_k$ guarantees that the original state chance constraints~\eqref{eq:state-chance-constraints} are satisfied.

Consequently, the construction of PRS and PUB sets constitutes the central offline task. These sets must be as tight as possible, meaning that they should capture, to the greatest feasible extent, the true propagation of the error dynamics induced jointly by input saturation and stochastic disturbances. Indeed, overly conservative or unnecessarily large PRS and PUB shrink the nominal admissible state set $\mathcal{Z}_k$, thereby reducing the feasible region of the SMPC and degrading closed-loop performance, potentially leading to infeasibility.

\subsection{Regions of Saturation Models}\label{subsec:sat-regions}
Saturation is a hard nonlinearity whose exact treatment is too burdensome for most analysis problems. To obtain tractable conditions for probabilistic reachability, an appropriate representation or bounding of the saturation term is essential. 
Most commonly, the saturation nonlinearity is embedded in some linear or convex inclusion, for instance through sector bounds or polytopic inclusion models~\cite{tarbouriechStability2011}. While such representations are designed to be compatible with standard optimization techniques, this work seeks to move beyond these conservative approximations and adopts an exact piecewise affine (PWA) representation of the saturated dynamics $f(e, v)$, which integrates nicely with the proposed polynomial optimization approach. To this end, following the approach of~\cite{daSilva1999}, the state-input space can be partitioned into distinct regions of saturation, where for each component $i$ of the input vector $u = K e + v$, the following cases arise:
\begin{equation}
    \label{eq:saturation-cases}
    \varphi(K_i e + v_i) =
    \begin{cases}
        1, & \text{if } K_i e + v_i \geq 1, \\
        K_i e + v_i, & \text{if } -1 \leq K_i e + v_i \leq 1, \\
        -1, & \text{if } K_i e + v_i \leq -1.
    \end{cases}
\end{equation}

For an input vector $u \in \mathbb{R}^m$, the three possibilities in~\eqref{eq:saturation-cases} yield $3^m$ saturation patterns. Each pattern is encoded by a vector $\vct{\Xi} = \vecc{\xi_1 & \cdots & \xi_m}^{\top} \in {\{-1, 0, +1 \}}^m$, whose $i$-th entry specifies whether actuator $i$ is saturated at its upper limit ($\xi_i = +1$), lower limit ($\xi_i = -1$), or operating in the linear region ($\xi_i = 0$). By slight abuse of notation, let the subscript $j$ herafter denote region membership rather than vector components, and index these pattern vectors by $j = 1, \dots, 3^m$. Let $R_j^{\mathrm{sat}}$ and $c_j^{\mathrm{sat}}$ stack the saturation-pattern inequalities from~\eqref{eq:saturation-cases}. Since the nominal input satisfies $v \in \mathcal{U}$, define the augmented region data
\begin{equation}
    \label{eq:augmented-saturation-region-data}
    R_j =
    \begin{bmatrix*}[r]
        \multicolumn{2}{c}{R_j^{\mathrm{sat}}}\\
        \mat{0}_{m\times n} & \mat{I}_m\\
        \mat{0}_{m\times n} & -\mat{I}_m
    \end{bmatrix*},
    \qquad
    c_j =
    \begin{bmatrix}
        c_j^{\mathrm{sat}}\\
        \vct{1}_m\\
        \vct{1}_m
    \end{bmatrix}.
\end{equation}
Each $\boldsymbol{\Xi}_j$ then corresponds to an admissible polyhedral saturation region, compactly expressed as
\begin{equation}
    \label{eq:saturation-region}
    \mathcal{S}_j = 
    \left\{ (e, v) \in \mathbb{R}^{n+m} \mathrel{} \middle| \mathrel{} R_j \vecc{e \\ v} \leq c_j \right\},
\end{equation}
where the last $2m$ rows encode $v \in \mathcal{U}$. Thus, $\mathcal{S}_j$ is the admissible part of the $j$-th saturation cell. The collection $\{\mathcal{S}_1, \dots, \mathcal{S}_{3^m}\}$ forms a partition of $\R^n \times \mathcal{U}$, with mutually disjoint interiors.

Inside any given region $\mathcal{S}_j$, the saturated control signal is an exact affine function of $(e, v)$,
\begin{equation}
    \label{eq:affine-saturation}
    \varphi(Ke + v) =
    \diag(\vct{1}_m - \abs{\vct{\Xi}_j}) (Ke + v) + d \paren{\vct{\Xi}_j},
\end{equation}
where the absolute value is intended componentwise and the offset vector $d(\vct{\Xi}_j) \in \mathbb{R}^m$ is
\begin{equation}
    \label{eq:offset-vector}
    d_i(\boldsymbol{\Xi}_j) =
    \begin{cases}
        1, & \text{if } \xi_{i} = 1, \\
        0, & \text{if } \xi_{i} = 0, \\
        -1, & \text{if } \xi_{i} = -1.
    \end{cases}
\end{equation}

Letting $\mat{D}_j \in \mathbb{R}^{m \times m}$ be the diagonal indicator matrix $\mat{D}_j = \diag(\vct{1}_m - \abs{\vct{\Xi}_j})$, and substituting~\eqref{eq:affine-saturation} into the dynamics gives, for every $(e, v) \in \mathcal{S}_j$,
\begin{equation}
    \label{eq:region-dynamics}
    f_j(e, v) = \bar{A}_j e + \bar{B}_j v + B d(\vct{\Xi}_j),
\end{equation}
where the region-specific system matrices are given by
\begin{equation}
    \label{eq:region-matrices}
    \bar{A}_j = A + B \mat{D}_j K,
    \qquad
    \bar{B}_j = B(\mat{D}_j - \mat{I}_m).
\end{equation}

Thus, on the admissible domain $\R^n \times \mathcal{U}$, the dynamics $e_{k+1} = f(e_k, v_k) + w_k$ are PWA, with $f(e, v) = f_j(e, v)$ for all $(e, v) \in \mathcal{S}_j$, and all $j = 1, \dots, 3^m$. Note that this approach also handles asymmetric saturation, since the saturation bounds are explicitly incorporated in~\eqref{eq:augmented-saturation-region-data} and~\eqref{eq:saturation-region}; see Remark~\ref{rem:sat-bounds}.

\section{Semi-algebraic PRS Synthesis}\label{sec:set-synthesis}
The preceding section gives an exact PWA representation on each admissible saturation region $\mathcal{S}_j$. The analysis herein focuses on certifying PRS and PUB uniformly for all $(e,v) \in \mathcal{S}_j$, for all $j = 1, \dots, 3^m$.

\subsection{Drift Condition}
The following result shows that PRS and PUB guarantees can be obtained by enforcing a uniform stochastic Lyapunov drift condition across all saturation regions, adopting a contraction approach analogous to that present in~\cite{KaramPRS2025}.
\begin{theorem}\label{th:drift-prs-pub}
    Consider the nonlinear error dynamics in~\eqref{eq:sat-error-dynamics}, and let $e_0 = 0$. Suppose there exists a Lyapunov function $V: \R^n \to \R_{\geq 0}$ with $V(0) = 0$, $V(e) > 0$ for all $e \neq 0$, and $\lim_{e \to \infty} V(e) = \infty$, and constants $\lambda \in (0,1)$ and $\beta > 0$ such that the conditional expectation satisfies
    \begin{equation}
        \label{eq:drift-condition}
        \E_w \brackets{V\paren{f_j(e, v) + w}} \leq \lambda V(e) + \beta, \quad \forall (e, v) \in \mathcal{S}_j,
    \end{equation}
    for all $j = 1, \dots, 3^m$. Then, the sublevel set
    \begin{equation}
        \label{eq:drift-prs}
        \mathcal{R}_k^\varepsilon = 
        \left\{
            e \in \R^n \mathrel{} \middle| \mathrel{} V(e) \leq \frac{1 - \lambda^k}{\varepsilon (1 - \lambda)} \beta
        \right\},
    \end{equation}
    is a $k$-step PRS with violation level $\varepsilon$ for the stochastic process ${\{ e_k \}}_{k \geq 0}$. Similarly, the sublevel set
    \begin{equation}
        \label{eq:drift-pub}
        \mathcal{R}^\varepsilon = 
        \left\{
            e \in \R^n \mathrel{} \middle| \mathrel{} V(e) \leq \frac{\beta}{\varepsilon (1 - \lambda)} + \zeta
        \right\},
    \end{equation}
    with $\zeta > 0$, is a PUB with violation level $\varepsilon$ for the stochastic process ${\{ e_k \}}_{k \geq 0}$.
\end{theorem}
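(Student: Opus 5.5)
The plan is to iterate the drift condition~\eqref{eq:drift-condition} in expectation and then apply Markov's inequality to the nonnegative random variable $V(e_k)$.

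\textbf{Step 1: unconditional drift.} Since every admissible pair $(e_k,v_k)$ lies in $\R^n\times\mathcal{U}$, it belongs to some region $\mathcal{S}_j$, and there $f(e_k,v_k)=f_j(e_k,v_k)$. I will assume the nominal inputs $v_k$ are deterministic (or at least independent of $w_k$), and that $w_k$ is independent of $(e_k,v_k)$ by the i.i.d.\ assumption; then conditioning on $e_k$ gives
\begin{equation*}
\E\brackets{V(e_{k+1})\mid e_k}\le \lambda V(e_k)+\beta .
\end{equation*}
On region boundaries several $j$ apply, but $f$ is continuous, so all choices agree. Taking total expectation, and using the moment assumption on $\mathbb{P}_w$ to ensure finiteness for polynomial $V$, yields $\E[V(e_{k+1})]\le\lambda\E[V(e_k)]+\beta$.

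\textbf{Step 2: unrolling.} Induction on $k$ gives
\begin{equation*}
\E[V(e_k)]\le \lambda^k V(e_0)+\beta\,\frac{1-\lambda^k}{1-\lambda}.
\end{equation*}
With $e_0=0$ and $V(0)=0$, this reduces to $\E[V(e_k)]\le \beta(1-\lambda^k)/(1-\lambda)$.

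\textbf{Step 3: the PRS claim.} Set $\gamma_k=\beta(1-\lambda^k)/(\varepsilon(1-\lambda))$. Markov's inequality gives $\Pr\{V(e_k)>\gamma_k\}\le \E[V(e_k)]/\gamma_k\le\varepsilon$, hence $\Pr\{e_k\in\mathcal{R}_k^\varepsilon\}\ge 1-\varepsilon$. The case $k=0$ is degenerate: there $\gamma_0=0$, but $e_0=0$ lies in $\{V\le 0\}$ deterministically, so the claim still holds.

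\textbf{Step 4: the PUB claim.} Now $e_0$ is arbitrary, and Step 2 gives
\begin{equation*}
\E[V(e_k)]\le \lambda^k V(e_0)+\frac{\beta}{1-\lambda}.
\end{equation*}
Markov's inequality with threshold $\gamma=\beta/(\varepsilon(1-\lambda))+\zeta$ requires $\lambda^kV(e_0)+\beta/(1-\lambda)\le\varepsilon\gamma$, that is, $\lambda^k V(e_0)\le \varepsilon\zeta$. Since $\lambda\in(0,1)$ and $\zeta>0$, this holds for all $k\ge t(e_0)$, where
\begin{equation*}
t(e_0)=\left\lceil \frac{\log\paren{\varepsilon\zeta/V(e_0)}}{\log\lambda}\right\rceil
\end{equation*}
when $V(e_0)>\varepsilon\zeta$, and $t(e_0)=0$ otherwise.

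\textbf{Main obstacle.} The delicate step is Step 1. The drift condition is imposed per region with $v$ ranging over $\mathcal{U}$, so it must hold uniformly in $v$. It must also be legitimate to condition on $(e_k,v_k)$ while $w_k$ remains independent, which requires $v_k$ not to depend on $w_k$; I will state this causality/measurability assumption explicitly. Integrability of $V(e_k)$ I will establish inductively from the drift inequality itself, since it bounds $\E[V(e_{k+1})]$ by a finite quantity whenever $\E[V(e_k)]$ is finite. The remaining steps are routine.
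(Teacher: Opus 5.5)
Your proposal is correct and follows essentially the same route as the paper: iterate the drift condition in expectation to get $\E[V(e_k)] \le \lambda^k V(e_0) + \beta\frac{1-\lambda^k}{1-\lambda}$, then apply Markov's inequality to $V(e_k)$ at the PRS and PUB thresholds. Your explicit formula for $t(e_0)$, your treatment of the degenerate case $k=0$, and your stated independence of $w_k$ from $(e_k,v_k)$ are minor refinements of points the paper handles through a limit argument or leaves implicit.
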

\begin{proof}
    Since $v_k \in \mathcal{U}$ and the active saturation pattern determines one admissible region, the pair $(e_k,v_k)$ belongs to some $\mathcal{S}_j$. The drift condition in~\eqref{eq:drift-condition} gives
    \begin{equation*}
        \E\brackets{V(e_{k+1}) \mid e_k, v_k} \leq \lambda V(e_k) + \beta .
    \end{equation*}
    Taking expectations and iterating from $e_0 = 0$ yields
    \begin{equation}
        \label{eq:recursive-bound}
        \E\brackets{V(e_k)} \leq \sum_{i=0}^{k-1}\lambda^i \beta
        = \frac{1-\lambda^k}{1-\lambda}\beta .
    \end{equation}
    For $\beta > 0$, Markov's inequality applied to the nonnegative random variable $V(e_k)$, with
    $r_k = \frac{1-\lambda^k}{\varepsilon(1-\lambda)}\beta$, gives
    \begin{equation*}
        \Pr\{V(e_k) > r_k\} \leq \frac{\E\brackets{V(e_k)}}{r_k} \leq \varepsilon.
    \end{equation*}
    Equivalently, $\Pr\{e_k \in \mathcal{R}_k^\varepsilon\} \geq 1-\varepsilon$, proving the PRS claim. For the ultimate bound, the recursion 
    in~\eqref{eq:recursive-bound} implies, for an arbitrary initial condition $e_0$,
    \begin{align*}
        \limsup_{k \to \infty} \E \brackets{ V \left( e_k \right) } 
                    &\leq \lim_{k \to \infty} \paren{\lambda^k V \left( e_0 \right) + \frac{1 - \lambda^k}{1 - \lambda} \beta} \\
                    &= \frac{1}{1 - \lambda} \beta. 
    \end{align*}
    Hence, by the definition of limits, for every $\zeta > 0$ there exists a finite $t(e_0)$ such that
    \begin{equation*}
        \E\brackets{V(e_k)} \leq \frac{\beta}{1-\lambda}+\varepsilon\zeta,
        \qquad \forall k \geq t(e_0).
    \end{equation*}
    Applying Markov's inequality with
    $r = \frac{\beta}{\varepsilon (1 - \lambda)} + \zeta$ gives
    \begin{equation*}
        \Pr\{V(e_k) > r\} \leq \frac{\E\brackets{V(e_k)}} {r} \leq \varepsilon, \qquad \forall k \geq t(e_0).
    \end{equation*}
    Therefore $\Pr\{e_k \in \mathcal{R}^{\varepsilon}\}\geq 1-\varepsilon$ for all $k \geq t(e_0)$, which proves the PUB claim.
\end{proof}

Theorem~\ref{th:drift-prs-pub} establishes a sufficient condition for the existence of valid PRS and PUB.\@ Notably, the thresholds defining $\mathcal{R}_k^\varepsilon$ increase monotonically and converge to the PUB threshold, hence $\mathcal{R}_k^\varepsilon \subseteq \mathcal{R}_{k+1}^\varepsilon \subseteq \mathcal{R}^\varepsilon$. Having guaranteed the analytical existence of such sets, the next step is to optimize their geometry to ensure they introduce minimal conservatism into the nominal SMPC problem.

\subsection{PUB Geometry Optimization}\label{subsec:objective-func}
The nominal MPC uses the tightenings $\mathcal{Z}_k = \mathcal{X} \ominus \mathcal{R}_k^\varepsilon$ and $\mathcal{Z}_\infty =\mathcal{X} \ominus \mathcal{R}^\varepsilon$ for its stage and terminal constraints, respectively. Feasibility and performance considerations in SMPC therefore favor large tightened sets. Since $\mathcal{R}_k^\varepsilon \subseteq \mathcal{R}^\varepsilon$ for all $k$, the inclusion $\mathcal{Z}_\infty \subseteq \mathcal{Z}_k$ also holds for all $k$. Thus, enlarging $\mathcal{Z}_\infty$ expands a common subset of admissible states at all time steps. In line with this objective, the development herein focuses on optimizing the geometry of $\mathcal{R}^\varepsilon$.

The ideal design of $\mathcal{R}^\varepsilon$ would erode the state constraint set $\mathcal{X}$ only in directions that become active during closed-loop operation, thereby maximizing the nominal feasible region while guaranteeing the required probability of constraint satisfaction. In practice, however, the directions of active constraints are unknown a priori, as they depend on a variety of factors that only materialize during online operation. Lacking any directional preference, a principled offline approach would be to enforce a geometry-preserving tightening. To this end, consider the following results.

\begin{proposition}\label{prop:tightening-inner-approx}
Let $\mathcal{X} \subset \mathbb{R}^n$ be a convex set. If $\mathcal{R}^\varepsilon \subseteq \rho \mathcal{X}$ for some $\rho \in [0, 1]$, then
\begin{equation}\label{eq:tightening-inclusion}
    (1 - \rho) \mathcal{X} \subseteq \mathcal{X} \ominus \mathcal{R}^\varepsilon.
\end{equation}
\end{proposition}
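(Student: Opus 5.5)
The plan is to argue directly from the definition of the Pontryagin difference, using nothing but convexity of $\mathcal{X}$. Recall that $\mathcal{X} \ominus \mathcal{R}^\varepsilon = \{a \in \R^n \mid a + b \in \mathcal{X},\ \forall b \in \mathcal{R}^\varepsilon\}$. To prove the inclusion~\eqref{eq:tightening-inclusion}, I will fix an arbitrary $x \in (1-\rho)\mathcal{X}$ and an arbitrary $r \in \mathcal{R}^\varepsilon$, and then show that $x + r \in \mathcal{X}$.

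First I unpack the scaled sets. By definition of set scaling, $\alpha\mathcal{X} = \{\alpha y \mid y \in \mathcal{X}\}$. So there exists $a \in \mathcal{X}$ with $x = (1-\rho)a$. By the hypothesis $\mathcal{R}^\varepsilon \subseteq \rho\mathcal{X}$, there also exists $b \in \mathcal{X}$ with $r = \rho b$. Hence
\begin{equation*}
x + r = (1-\rho)a + \rho b ,
\end{equation*}
which is a convex combination of two points of $\mathcal{X}$, because $\rho \in [0,1]$. Convexity of $\mathcal{X}$ then gives $x + r \in \mathcal{X}$. Since $r \in \mathcal{R}^\varepsilon$ was arbitrary, $x \in \mathcal{X} \ominus \mathcal{R}^\varepsilon$, and since $x$ was arbitrary, the inclusion follows.

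The only step needing care is the boundary values $\rho \in \{0,1\}$, where one might worry about degenerate sets such as $0\cdot\mathcal{X}$. Writing both points explicitly as $(1-\rho)a$ and $\rho b$ with $a,b \in \mathcal{X}$ handles these uniformly.
\begin{itemize}
\item For $\rho = 0$, the hypothesis forces $\mathcal{R}^\varepsilon \subseteq \{0\}$, so $x + r = a \in \mathcal{X}$.
\item For $\rho = 1$, the set $(1-\rho)\mathcal{X}$ reduces to $\{0\}$ (when $\mathcal{X}$ is nonempty), so $x + r = b \in \mathcal{X}$.
\end{itemize}
Neither case requires $0 \in \mathcal{X}$. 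Apart from this, there is no real obstacle: the argument uses neither the polyhedral form of $\mathcal{X}$ nor any property of $\mathcal{R}^\varepsilon$ from Theorem~\ref{th:drift-prs-pub} beyond the assumed inclusion $\mathcal{R}^\varepsilon \subseteq \rho\mathcal{X}$.
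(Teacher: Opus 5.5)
Your proof is correct and matches the paper's argument: write the two points as $(1-\rho)a$ and $\rho b$ with $a,b\in\mathcal{X}$, then use convexity to get $x+r\in\mathcal{X}$ and conclude from the definition of the Pontryagin difference. Your separate treatment of $\rho\in\{0,1\}$ is harmless but not needed, since the convex-combination step already covers the endpoints.
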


\begin{proof}
    Let $z \in (1 - \rho) \mathcal{X}$ and $r \in \mathcal{R}^\varepsilon$. By definition, $z = (1 - \rho) x$ for some $x \in \mathcal{X}$. Likewise, the inclusion $\mathcal{R}^\varepsilon \subseteq \rho \mathcal{X}$ implies there exists some $y \in \mathcal{X}$ such that $r = \rho y$. Consequently, by convexity of $\mathcal{X}$,
    \begin{equation*}
        z + r = (1 - \rho)x + \rho y \in \mathcal{X}.
    \end{equation*}
    Since $z + r \in \mathcal{X}$ for every $r \in \mathcal{R}^\varepsilon$, it follows from the definition of the Pontryagin difference that $z \in \mathcal{X} \ominus \mathcal{R}^\varepsilon$. Since $z$ is arbitrary, the inclusion in~\eqref{eq:tightening-inclusion} holds.
\end{proof}

By imposing $\mathcal{R}^\varepsilon \subseteq \rho \mathcal{X}$, Proposition~\ref{prop:tightening-inner-approx} guarantees that the tightened set $\mathcal{Z}_\infty$ contains the homothetic inner approximation $(1-\rho)\mathcal{X}$. Geometry preservation is then achieved by maximizing this inner approximation, which is equivalent to choosing the smallest feasible $\rho$. The following result characterizes the optimal value $\rho^\star$ and its relation to constraint tightening.
\begin{proposition}\label{prop:proxy-min}
Let $\mathcal{X}$ be a compact polyhedron with $0 \in \operatorname{int}(\mathcal{X})$. For any non-empty set $\mathcal{R}^{\varepsilon} \subseteq \R^n$, define
\begin{equation}\label{eq:min-uniform-scaling}
    \rho^\star
    =
    \inf\{\rho \geq 0 \mid \mathcal{R}^{\varepsilon} \subseteq \rho \mathcal{X}\}.
\end{equation}
Then
\begin{equation}\label{eq:min-uniform-scaling-support}
    \rho^\star
    =
    \max_{i=1,\dots,n_c}
    \frac{\delta_{\mathcal{R}^{\varepsilon}}(H_i)}{h_i},
\end{equation}
Equivalently, $\rho^\star$ is the infimum over all scalars $\rho \geq 0$ satisfying
\begin{equation}\label{eq:support-upper-bound}
    \delta_{\mathcal{R}^{\varepsilon}}(H_i) \leq \rho \, h_i,
    \qquad i=1,\dots,n_c.
\end{equation}
\end{proposition}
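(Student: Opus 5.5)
Since $0\in\operatorname{int}(\mathcal{X})$ and $\mathcal{X}=\{x\mid Hx\le h\}$, we have $h_i>0$ for every $i=1,\dots,n_c$, so the ratios in~\eqref{eq:min-uniform-scaling-support} are well defined (possibly $+\infty$ if $\mathcal{R}^\varepsilon$ is unbounded in direction $H_i$). The plan is to show that, for every $\rho>0$, the inclusion $\mathcal{R}^\varepsilon\subseteq\rho\mathcal{X}$ is equivalent to the family of scalar inequalities~\eqref{eq:support-upper-bound}. Taking the infimum over $\rho$ then gives both the formula and its equivalent characterization.

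\textbf{Step 1: reduce to the support-function inequalities.} For $\rho>0$ we have $\rho\mathcal{X}=\{x\mid Hx\le\rho h\}$. Hence $\mathcal{R}^\varepsilon\subseteq\rho\mathcal{X}$ holds iff $H_i r\le\rho h_i$ for all $r\in\mathcal{R}^\varepsilon$ and all $i$. Taking the supremum over $r$, this is exactly $\delta_{\mathcal{R}^\varepsilon}(H_i)\le\rho h_i$ for all $i$, which is~\eqref{eq:support-upper-bound}. Dividing by $h_i>0$, the condition becomes $\rho\ge\max_i\delta_{\mathcal{R}^\varepsilon}(H_i)/h_i=:\mu$. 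So the feasible set of $\rho>0$ in~\eqref{eq:min-uniform-scaling} is $[\mu,\infty)\cap(0,\infty)$.

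\textbf{Step 2: treat $\rho=0$ separately.} By convention $0\cdot\mathcal{X}=\{0\}$. Then $\mathcal{R}^\varepsilon\subseteq\{0\}$ means $\mathcal{R}^\varepsilon=\{0\}$, since $\mathcal{R}^\varepsilon$ is non-empty. In that case $\mu=0$, so the two sides agree. Conversely, if $\mu\le 0$, the $\rho>0$ analysis already gives $\rho^\star=0$ as an infimum, so the value $\rho^\star=\max(\mu,0)$ is unaffected.

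\textbf{Step 3: reconcile signs and conclude.} Since $\mathcal{X}$ is compact with $0$ in its interior, the rows $H_i$ have positive combinations spanning $\R^n$, i.e.\ $\sum_i\alpha_iH_i=0$ for some $\alpha>0$. For any $r\in\mathcal{R}^\varepsilon$ this gives $\sum_i\alpha_iH_ir=0$, so some $H_ir\ge0$, and therefore $\mu\ge0$. The infimum of $[\mu,\infty)$ is then $\mu$, which proves~\eqref{eq:min-uniform-scaling-support}; Step 1 already established the equivalent characterization via~\eqref{eq:support-upper-bound}. If $\mu=+\infty$, no $\rho$ is feasible and both sides equal $+\infty$.

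The main obstacle is this boundary bookkeeping: the case $\rho=0$, possible unboundedness of $\mathcal{R}^\varepsilon$, and showing $\mu\ge0$. Step 3 is where compactness of $\mathcal{X}$ is actually used.
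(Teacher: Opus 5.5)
Your proof is correct and follows the same route as the paper. Both write $\rho\mathcal{X}=\{x\mid H_i^\top x\le\rho h_i\}$, reduce $\mathcal{R}^\varepsilon\subseteq\rho\mathcal{X}$ facet by facet to $\delta_{\mathcal{R}^\varepsilon}(H_i)\le\rho h_i$, divide by $h_i>0$, and take the infimum; your extra bookkeeping on $\rho=0$ and on $\max_i\delta_{\mathcal{R}^\varepsilon}(H_i)/h_i\ge0$ makes explicit what the paper uses only tacitly, since its halfspace form of $0\cdot\mathcal{X}$ and its final infimum step both rely on compactness of $\mathcal{X}$ without saying so.
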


\begin{proof}
For any $\rho \geq 0$, $\rho\mathcal{X}$ is a polyhedron defined as $\rho \mathcal{X} = \{x \in \R^n \mid H_i^\top x \leq \rho \, h_i, \, \forall i =1, \dots, n_c\}$. Hence $\mathcal{R}^{\varepsilon} \subseteq \rho\mathcal{X}$ if and only if
\begin{equation*}
    H_i^\top e \leq \rho \, h_i, \qquad \forall e \in \mathcal{R}^{\varepsilon}, \quad \forall i=1,\dots,n_c.
\end{equation*}
Equivalently, taking the supremum over $e \in \mathcal{R}^{\varepsilon}$,
\begin{equation*}
    \delta_{\mathcal{R}^{\varepsilon}}(H_i) \leq \rho \, h_i, 
    \qquad \forall i=1,\dots,n_c .
\end{equation*}
Since $h_i>0$, the feasible scalars are those satisfying
\begin{equation*}
    \rho \geq
    \max_{i=1,\dots,n_c}
    \frac{\delta_{\mathcal{R}^{\varepsilon}}(H_i)}{h_i}.
\end{equation*}
Taking the infimum over this set gives~\eqref{eq:min-uniform-scaling-support}. The characterization~\eqref{eq:support-upper-bound} follows from the same inequalities.
\end{proof}
Thus, the minimizer $\rho^\star$ uniquely determines the largest support of $\mathcal{R}^\varepsilon$ (but not its geometry). Minimizing $\rho$ therefore minimizes the largest normalized erosion across all facets of $\mathcal{X}$, meaning that the tightened set $\mathcal{X} \ominus \mathcal{R}^\varepsilon$ contains the maximal shape-preserving feasible region for the nominal MPC, constructed without arbitrary bias toward any particular constraint boundary. 


\subsection{Sum-of-Squares Certificates}\label{subsec:opt-prob}
Theorem~\ref{th:drift-prs-pub} and Propositions~\ref{prop:tightening-inner-approx} and~\ref{prop:proxy-min} establish sufficient criteria for synthesizing a geometrically optimized PUB.\@ Because the saturated error dynamics are piecewise polynomial on $\mathcal{S}_j$ for all $j$, these conditions are amenable to tractable Sum-of-Squares (SOS) optimization.

\begin{definition}[Sum-of-Squares Polynomial]\label{def:sos-poly}
    For $e \in \R^n$, a multivariate polynomial $p \in \R[e]$ of degree $2r$ is a sum-of-squares if $p(e) = \sum_{i = 1}^r q_i{(e)}^2$ for some 
    polynomials $q_i$. Equivalently, $p(e) = \Phi_{0:r}{(e)}\T Q \, \Phi_{0:r}(e)$ for a Gram matrix $Q \psd$~\cite{parrilo2003semidefinite}.
\end{definition}

This Gram matrix equivalence allows SOS verification to be cast as a convex semidefinite program (SDP). Thus, the polynomial Lyapunov function can be parameterized as $V(e) = \Phi_{1:r}{(e)}^\top Q \, \Phi_{1:r}(e)$ with Gram matrix $Q \succeq 0$. Excluding the constant monomial directly enforces $V(0) = 0$. Alongside respecting the drift condition, rewritten as
\begin{equation}
    \label{eq:strengthened-drift}
    \Delta_j(e,v) = \lambda V(e) + \beta - \E_w\brackets{V \paren{f_j(e,v) + w}} \geq 0,
\end{equation}
for all $(e, v) \in \mathcal{S}_j$, and all $j = 1, \dots, 3^m$, the Lyapunov function is required to satisfy $V(e) \geq \gamma e^\top e$ for some $\gamma > 0$, to guarantee radial unboundedness and strict positivity for all $e \neq 0$.
\begin{remark}
    The conditional expectation evaluates exactly to $\E_w\brackets{V \paren{f_j(e,v) + w}} = \trace\paren{Q\,T_j(e,v)\,M_w\,T_j(e,v)\T}$. For compactness, let $f = f_j(e,v)$. Under multi-index notation, the multivariate binomial theorem expands the $\alpha$-th element of the basis vector as $(f + w)^\alpha = \sum_{\beta \le \alpha} \binom{\alpha}{\beta} f^{\alpha - \beta} w^\beta$. This allows factorizing the evaluated basis as $\Phi_{1:r}(f + w) = T_j(e,v)\Phi_{1:r}(w)$, where the matrix $T_j(e,v)$ collects the binomial coefficients and polynomials in $f$. Substituting this into $V(f+w)$, taking the expectation, and applying the cyclic property of the trace isolates the disturbance into the constant moment matrix $M_w = \E\brackets{\Phi_{1:r}{(w)}\Phi_{1:r}{(w)}^\top}$, containing moments of the disturbance $w$ up to degree $2r$.
\end{remark}
To apply and evaluate this drift in the relevant regions, 
express $\mathcal{S}_j$ as a vector of semi-algebraic constraints
\begin{equation}
    g_j(e,v) = c_j - R_j \vecc{e \\ v} \geq 0.
\end{equation}
Then, enforcing the polynomial non-negativity constraint (and thus certifying the drift condition) on the appropriate regions is achieved by introducing vectors of SOS polynomial multipliers $\sigma_j \in \Sigma{[e,v]}^{\dim(c_j)}$, and applying the sufficient direction of Putinar's Positivstellensatz (or Generalized S-procedure). For notational brevity, the explicit dependence of the multipliers on the state and input variables is omitted hereafter.
\begin{equation}\label{eq:domain-drift}
    p_j(e, v) = \Delta_j(e, v) - \sigma_j^\top g_j(e,v) \in \Sigma[e,v].
\end{equation}
Similarly, Propositions~\ref{prop:tightening-inner-approx} and~\ref{prop:proxy-min} require the PUB to be enclosed within $\rho \mathcal{X}$, such that 
$H_i^\top e \le \rho \, h_i$ for all $e$ satisfying $V(e) \leq \tau_\infty(\varepsilon)$, with
\begin{equation}
    \tau_\infty(\varepsilon) = \frac{\beta}{\varepsilon (1-\lambda) } + \zeta.     
\end{equation}
This inclusion is enforced for all facets $i = 1, \dots, n_c$, via SOS multipliers $\mu_i \in \Sigma[e]$ by
\begin{equation}
    \psi_i(e) = \rho\,h_i - H_i^\top e - \mu_i \left(\tau_\infty(\varepsilon) - V(e)\right) \in \Sigma[e].
\end{equation}

Collecting the objective and the multiplier certificates yields the following SOS optimization program
\begin{subequations}\label{eq:sos-program}
\begin{alignat}{2}
    \min_{\mathclap{\substack{V,\lambda,\rho,\\ \gamma,\sigma,\mu}}}\quad
        & \rho && \\
    \text{s.t.}\quad
        & \mathrlap{0 < \lambda < 1, \quad 0 < \rho \leq 1, \quad \gamma > 0,} && \\
        & V(e) - \gamma e^\top e \in \Sigma[e],\\
        & \sigma_j \in \Sigma{[e,v]}^{\dim(c_j)},
          \quad && \forall j = 1, \dots, 3^m,\\
        & p_j(e,v) \in \Sigma[e,v],
          \quad && \forall j = 1, \dots, 3^m,\\
        & \mu_i \in \Sigma[e],
          \quad && \forall i = 1, \dots, n_c,\\
        & \psi_i(e) \in \Sigma[e],
          \quad && \forall i = 1, \dots, n_c .
\end{alignat}
\end{subequations}
The decision variables are the function $V(e)$ (through the Gram matrix $Q$), the contraction factor $\lambda$, the scaling factor $\rho$, the positivity margin $\gamma$, and the SOS multipliers. The degrees of the Lyapunov function and multipliers, as well as $\tau_\infty$ are fixed a priori. 

Because the drift certificate features the product $\lambda Q$ and the containment certificate contains products between $\mu_i(e)$ and $V(e)$, the program~\eqref{eq:sos-program} is not jointly convex. A practical implementation therefore fixes $\lambda$ on an outer grid, and solves a sequence of convex SOS sub-problems by alternating between $(Q,\sigma,\rho,\gamma)$ and $(\mu,\rho)$. This process is laid out in Algorithm~\ref{alg:sos-synthesis} below.

\begin{algorithm}[h!]
\caption{Offline SOS Optimization}\label{alg:sos-synthesis}
\begin{algorithmic}[1]
\REQUIRE{Admissible saturation regions $\mathcal{S}_j$, moment matrix $M_w$, violation level $\varepsilon$, fixed scalars $\tau_\infty,\zeta$, polynomial degree choices, grid $\Lambda \subset (0,1)$, tolerance $\eta$.}

\STATE{Set $\rho^\star \gets 1$.}

\item[]\textbf{Outer Grid}
\FOR{$\lambda \in \Lambda$}
    \STATE{Set $\beta \gets \varepsilon(1-\lambda)(\tau_\infty-\zeta)$.}
    \STATE{Initialize $\mu \gets 1$, $\rho_{\mathrm{old}} \gets 1$.}
    \FOR{$\ell = 0,1,\dots$}
            \STATE{\textbf{$\mathbf{Q}$-step:} With $\mu$ fixed, solve~\eqref{eq:sos-program} for $Q$, $\sigma$, $\rho$, $\gamma$.}
            \STATE{\textbf{$\boldsymbol{\mu}$-step:} With $Q$, $\sigma$ fixed, solve~\eqref{eq:sos-program} for $\mu$, $\rho$.}
        \IF{infeasible or $\rho_{\mathrm{old}} - \rho < \eta$}
            \STATE{break.}
        \ENDIF{}
        \STATE{Set $\rho_{\mathrm{old}} \gets \rho$.}
    \ENDFOR{}
    \IF{feasible and $\rho < \rho^\star$}
        \STATE{Store $(Q^\star,\lambda^\star,\rho^\star,\gamma^\star,\sigma^\star,\mu^\star)$.}
    \ENDIF{}
\ENDFOR{}
\RETURN{$V^\star(e)$, $\tau_k^\star$, $\rho^\star$.}
\end{algorithmic}
\end{algorithm}

\begin{remark}\label{rem:fixed-tau}
The PUB level $\tau_\infty$ is fixed a priori rather than optimized through $\beta$. For any fixed $\lambda$ selected by the outer grid and any violation level $\varepsilon$, prescribing $\tau_\infty > \zeta$ uniquely determines the corresponding drift offset as $\beta = \varepsilon(1-\lambda)\left(\tau_\infty-\zeta\right)$. This entails no loss of generality for the SOS search, since a common positive rescaling of the Lyapunov function, the level $\tau_\infty$, and the margin $\gamma$ preserves the drift, positivity, and containment certificates.
\end{remark}

\section{Numerical Simulations}\label{sec:numerical-sims}
The proposed method is evaluated on three illustrative examples, all of which share the same toy system dynamics, with $e \in \R^2$ and $u \in \R^1$, but differ in either the noise distribution or the saturation bounds. The focus of this evaluation is twofold: demonstrating the reduction in conservatism with respect to standard quadratic-LMI derived sets, and showcasing the geometric flexibility of higher-degree polynomial level sets, which is particularly evident when the disturbance is skewed or the saturation bounds are asymmetric. The code used in this section is available online\footnote{https://github.com/CarloKaram/sos-preach}.

The numerical example uses the stable system
\begin{equation*}
A = \begin{bmatrix} 0.89 & 0.10 \\ 0.10 & 0.89 \end{bmatrix}, \;
B = \begin{bmatrix} 0 \\ 1 \end{bmatrix}, \;
K = \begin{bmatrix} -0.282 & -0.8415 \end{bmatrix},
\end{equation*}
with box state constraint $\mathcal{X} = {[-10,10]}^2$. The violation level is set to $\varepsilon=0.2$, and the reference covariance used in all three examples is $W=0.1 \cdot \mat{I}_2$. Three cases are considered:
\begin{enumerate}
    \item[(a)] Gaussian disturbance $w \sim \mathcal{N}(0, W)$; symmetric saturation bounds $u \in [-1, 1]$.
    \item[(b)] Shifted Gamma disturbance $w = \bar{w} - \theta$, with $\bar{w} \sim \Gamma(k,\, \theta)$, $\theta=0.1$, which yields $\E[w]=0$ and $\operatorname{Var}(w)=W$; symmetric saturation bounds $u \in [-1, 1]$.
    \item[(c)] Gaussian disturbance $w \sim \mathcal{N}(0, W)$; asymmetric saturation bounds $u \in [-10, 1]$.
\end{enumerate}

Figure~\ref{fig:pub-comparison} compares the optimized PUBs for quartic ($2r = 4$) and octic ($2r=8$) polynomials, with those based on the quadratic LMI construction of~\cite[Sec.~III.B]{KaramPRS2025}. In all cases, the LMI-based construction yields $\rho \approx 0.75$, while the polynomial certificates yield less conservative bounds. For instance, in case~(a), the scaling factor decreases to $\rho \approx 0.46$ for the degree-eight certificate. The higher-degree PUBs exhibit a general ellipsoidal shape, with flatter sides induced by the saturation nonlinearity. In cases~(b) and~(c), the minimal scaling similarly decreases to $\rho \approx 0.59$, and $\rho \approx 0.46$ when $2r=8$, respectively. Notably, while the degree-eight PUB in case~(c) is visibly smaller than in case~(a), both yield the identical $\rho$. This is expected because both sets share the same largest support; this support is strictly determined by the shared $+1$ saturation limit. Besides mere reductions in $\rho$, the geometric advantage of the proposed method is more pronounced in these asymmetric cases. The shifted Gamma disturbance induces a skewed error distribution, while the asymmetric input bounds modify the active saturation regimes: the corresponding polynomial level sets reflect these features without imposing an ellipsoidal geometry, as they exploit the higher order moments of the dynamics. Conversely, since the same covariance is used in all cases, a quadratic construction (LMI or SOS-based) cannot distinguish these scenarios beyond their second moments, and thus yields the same PUB across all three cases.

Figure~\ref{fig:asym-contours} more clearly shows how this geometry varies with the Lyapunov level in case~(c). For small levels, the relevant states remain close to the origin, where the dynamics are nearly linear, so the resulting sublevel sets (equivalently, PRS) are almost ellipsoidal. Larger levels reach regions where the saturation nonlinearity is active, and the asymmetric actuator bounds visibly deform the certified sublevel sets. This cannot be captured by a quadratic Lyapunov function, whose sublevel sets are homothetic ellipsoids: the $k$-step PRS and the PUB only differ by scale. Instead, a polynomial Lyapunov function generates a nested family of semi-algebraic sublevel sets whose shape may vary, allowing the PRS and PUB to reflect different regimes of the perturbed saturated dynamics.

\begin{figure*}[h]
    \centering
    \includegraphics[width=\textwidth]{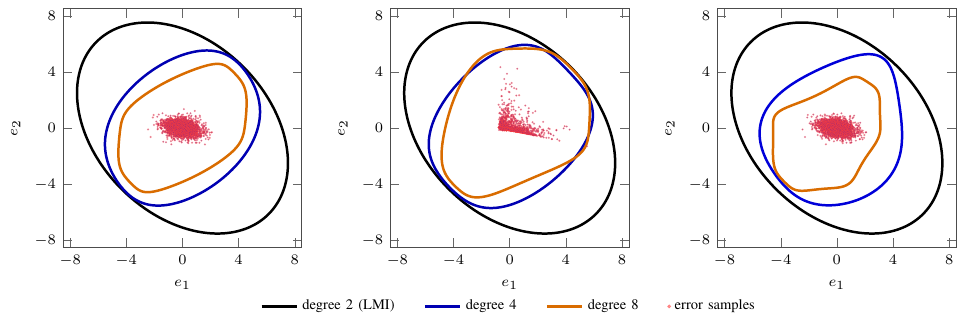}
    \caption{Synthesized PUBs for cases~(a),~(b), and~(c) (left-to-right). Red samples correspond to simulated error states at $k = 100$.}\label{fig:pub-comparison}
\end{figure*}

\begin{figure}[h]
    \centering
    \includegraphics[width=\columnwidth]{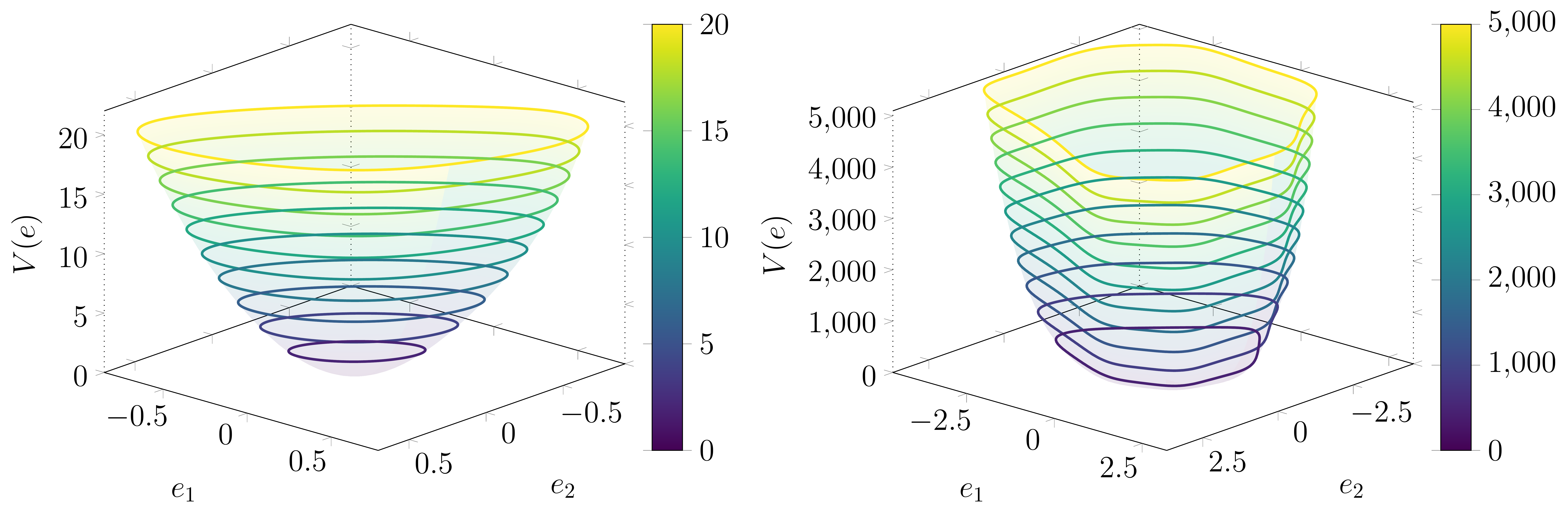}
    \caption{Selected sublevel contours (PRS) of the certified Lyapunov function $V(e)$ for case~(c) with $2r=8$, shown for $V(e)\leq 5000$. The function is positive definite. Lower levels remain close to ellipsoidal, while levels closer to the PUB threshold display the asymmetric geometry induced by the saturation bounds.}\label{fig:asym-contours}
\end{figure}

\section{Conclusion}\label{sec:conc}
This work presented a semi-algebraic framework to compute probabilistic reachable sets and ultimate bounds for saturated systems under unbounded stochastic disturbances. These sets are over-approximated by sublevel sets of an SOS Lyapunov function satisfying a drift criterion across saturation regions. A semidefinite program optimizes this function, shaping its sublevel sets such that the resultant PUB is optimized for a geometry-preserving constraint tightening. As numerically demonstrated, this significantly reduces the conservatism of the bounds. Future work will explore the moment-SOS hierarchy to directly optimize over probability and occupation measures, replacing current conservative Markov-based bounds with a sequence of converging semidefinite relaxations.

\bibliography{references}

@book{tarbouriechStability2011,
  title = {Stability and Stabilization of Linear Systems with Saturating Actuators},
  author = {Tarbouriech, Sophie and Garcia, Germain and Gomes Da Silva Jr., Jo{\~a}o Manoel and Queinnec, Isabelle},
  year = {2011},
  publisher = {Springer},
  isbn = {978-0-85729-940-6 978-0-85729-941-3},
}

@inproceedings{KaramPRS2025, 
  title={Probabilistic Reachable Set Estimation for Saturated Systems with Unbounded Additive Disturbances}, 
  DOI={10.1109/cdc57313.2025.11312343}, 
  booktitle={2025 IEEE 64th Conference on Decision and Control (CDC)}, 
  publisher={IEEE}, 
  author={Karam, Carlo and Tacchi-Bénard, Matteo and Fiacchini, Mirko}, 
  year={2025}, 
  pages={4547–4553}
}

@misc{karam2026SMPC,
  title={A Stochastic Tube-Based {MPC} Framework with Hard Input Constraints}, 
  author={Carlo Karam and Matteo Tacchi and Mirko Fiacchini},
  year={2026},
  eprint={2602.19867},
  archivePrefix={arXiv},
  primaryClass={eess.SY},
  url={https://arxiv.org/abs/2602.19867}, 
}

@article{parrilo2003semidefinite, 
  title={Semidefinite programming relaxations for semialgebraic problems}, 
  volume={96}, 
  number={2}, 
  journal={Mathematical Programming}, 
  publisher={Springer Science and Business Media LLC}, 
  author={Parrilo, Pablo A.}, 
  year={2003}, 
  pages={293–320} }

@article{lasserre2001global,
  author = {Lasserre, Jean B.},
  title = {Global Optimization with Polynomials and the Problem of Moments},
  journal = {SIAM Journal on Optimization},
  volume = {11},
  number = {3},
  pages = {796--817},
  year = {2001},
  doi = {10.1137/S1052623400366802}
}

@article{althoff2021set,
  author = {Althoff, Matthias and Frehse, Goran and Girard, Antoine},
  title = {Set Propagation Techniques for Reachability Analysis},
  journal = {Annual Review of Control, Robotics, and Autonomous Systems},
  volume = {4},
  pages = {369--395},
  year = {2021},
  doi = {10.1146/annurev-control-071420-081941}
}

@article{kurzhanskiy2007ellipsoidal,
  author={Kurzhanskiy, Alex A. and Varaiya, Pravin},
  journal={IEEE Transactions on Automatic Control}, 
  title={Ellipsoidal Techniques for Reachability Analysis of Discrete-Time Linear Systems}, 
  year={2007},
  volume={52},
  number={1},
  pages={26-38},
  doi={10.1109/TAC.2006.887900}}

@inproceedings{girard2005reachability,
  author = {Girard, Antoine},
  title = {Reachability of Uncertain Linear Systems Using Zonotopes},
  booktitle = {Hybrid Systems: Computation and Control},
  series = {Lecture Notes in Computer Science},
  volume = {3414},
  pages = {291--305},
  publisher = {Springer},
  year = {2005},
  doi = {10.1007/978-3-540-31954-2_19}
}

@inproceedings{leguernic2009reachability,
  author = {Le Guernic, Colas and Girard, Antoine},
  title = {Reachability Analysis of Hybrid Systems Using Support Functions},
  booktitle = {Computer Aided Verification},
  series = {Lecture Notes in Computer Science},
  volume = {5643},
  pages = {569--581},
  publisher = {Springer},
  year = {2009}
}

@book{blanchini2015set,
  author = {Blanchini, Franco and Miani, Stefano},
  title = {Set-Theoretic Methods in Control},
  edition = {2},
  publisher = {Birkh{\"a}user},
  year = {2015}
}

@article{abate2008probabilistic,
  author = {Abate, Alessandro and Prandini, Maria and Lygeros, John and Sastry, Shankar},
  title = {Probabilistic Reachability and Safety for Controlled Discrete Time Stochastic Hybrid Systems},
  journal = {Automatica},
  volume = {44},
  number = {11},
  pages = {2724--2734},
  year = {2008},
  doi = {10.1016/j.automatica.2008.03.027}
}

@article{prajna2007framework,
  author = {Prajna, Stephen and Jadbabaie, Ali and Pappas, George J.},
  title = {A Framework for Worst-Case and Stochastic Safety Verification Using Barrier Certificates},
  journal = {IEEE Transactions on Automatic Control},
  volume = {52},
  number = {8},
  pages = {1415--1428},
  year = {2007},
  doi = {10.1109/TAC.2007.902736}
}

@inproceedings{anand2022kinductive,
  author = {Anand, Mahathi and Murali, Vishnu and Trivedi, Ashutosh and Zamani, Majid},
  title = {$k$-Inductive Barrier Certificates for Stochastic Systems},
  booktitle = {Proceedings of the 25th ACM International Conference on Hybrid Systems: Computation and Control},
  series = {HSCC '22},
  pages = {12:1--12:11},
  publisher = {ACM},
  year = {2022},
  doi = {10.1145/3501710.3519532}
}

@article{mesbah2016stochastic,
  author = {Mesbah, Ali},
  title = {Stochastic Model Predictive Control: An Overview and Perspectives for Future Research},
  journal = {IEEE Control Systems Magazine},
  volume = {36},
  number = {6},
  pages = {30--44},
  year = {2016},
  doi = {10.1109/MCS.2016.2602087}
}

@article{cannon2011stochastic,
  author = {Cannon, Mark and Kouvaritakis, Basil and Rakovi{\'c}, Sa{\v{s}}a V. and Cheng, Qifeng},
  title = {Stochastic Tubes in Model Predictive Control with Probabilistic Constraints},
  journal = {IEEE Transactions on Automatic Control},
  volume = {56},
  number = {1},
  pages = {194--200},
  year = {2011},
  doi = {10.1109/TAC.2010.2086553}
}

@article{fiacchini2021probabilistic,
  author = {Fiacchini, Mirko and Alamo, Teodoro},
  title = {Probabilistic Reachable and Invariant Sets for Linear Systems with Correlated Disturbance},
  journal = {Automatica},
  volume = {132},
  pages = {109808},
  year = {2021},
  doi = {10.1016/j.automatica.2021.109808}
}

@inproceedings{hewing2018stochastic,
  author = {Hewing, Lukas and Zeilinger, Melanie N.},
  title = {Stochastic Model Predictive Control for Linear Systems Using Probabilistic Reachable Sets},
  booktitle = {2018 IEEE Conference on Decision and Control (CDC)},
  pages = {5182--5188},
  publisher = {IEEE},
  year = {2018},
  doi = {10.1109/CDC.2018.8619554}
}

@article{magron2019discrete,
author = {Magron, Victor and Garoche, Pierre-Loic and Henrion, Didier and Thirioux, Xavier},
title = {Semidefinite Approximations of Reachable Sets for Discrete-time Polynomial Systems},
journal = {SIAM Journal on Control and Optimization},
volume = {57},
number = {4},
pages = {2799-2820},
year = {2019},
doi = {10.1137/17M1121044},
}

@misc{kordabad2025almost,
  title={Sum-of-Squares Certificates for Almost-Sure Reachability of Stochastic Polynomial Systems}, 
  author={Arash Bahari Kordabad and Rupak Majumdar and Sadegh Soudjani},
  year={2025},
  eprint={2510.25513},
  archivePrefix={arXiv},
  primaryClass={math.OC},
  url={https://arxiv.org/abs/2510.25513}, 
}

@article{daSilva1999, 
  title={Polyhedral regions of local stability for linear discrete-time systems with saturating controls}, 
  volume={44}, 
  ISSN={0018-9286}, 
  number={11}, 
  journal={IEEE Transactions on Automatic Control}, 
  publisher={Institute of Electrical and Electronics Engineers (IEEE)}, 
  author={Gomes Da Silva Jr., Jo{\~a}o Manoel and Tarbouriech, S.}, 
  year={1999}, 
  pages={2081–2085} 
 }

@ARTICLE{korda2021spatio,
  author={Cibulka, Vít and Korda, Milan and Haniš, Tomáš},
  journal={IEEE Control Systems Letters}, 
  title={Spatio-Temporal Decomposition of Sum-of-Squares Programs for the Region of Attraction and Reachability}, 
  year={2022},
  volume={6},
  number={},
  pages={812-817},
}

@article{fiacchini2026measured,
  author = {Fiacchini, Mirko and Mammarella, Martina and Dabbene, Fabrizio},
  title = {Measured-State Conditioned Recursive Feasibility for Stochastic Model Predictive Control},
  journal = {International Journal of Robust and Nonlinear Control},
  volume = {36},
  number = {11},
  pages = {5964-5981},
  year = {2026}
}

\end{document}